\newtheorem{theorem}{Theorem}
\newtheorem{corollary}[]{Corollary}
\newtheorem{lemma}[]{Lemma}
\newtheorem{conjecture}[]{Conjecture}
\newtheorem{observation}[]{Observation}
\newtheorem{definition}[]{Definition}
\newtheorem{claim_new}{Claim}
\newtheorem{subclaim}{Subclaim}{\bf}{\it}
\newtheorem{case}{Case}
\newtheorem{subcase}{Case}
\numberwithin{subcase}{case}
\numberwithin{subcase1}{case1}
\newtheorem{case2}{Case}
\newtheorem{subcase2}{Case}
\numberwithin{subcase2}{case2}
\author[Nevil Anto, Manu Basavaraju]{Nevil Anto
  \and Manu Basavaraju }
\title{Gallai's Path Decomposition for 2-degenerate Graphs}
\affiliation{
 National Institute of Technology Karnataka, Surathkal, India}
\keywords{ Path decomposition, Gallai's path decomposition, 2-degenerate graphs, Outer-planar graphs, Series-parallel graphs}
\begin{document}

\publicationdata{vol. 25:1}
{2023}
{16}
{10.46298/dmtcs.10313}
{2022-11-16; 2022-11-16; 2023-05-03}
{2023-05-04}

\maketitle
\begin{abstract}
Gallai's path decomposition conjecture states that if $G$ is a connected graph on $n$ vertices, then the edges of $G$ can be decomposed into at most $\lceil \frac{n }{2} \rceil$ paths. A graph is said to be an {\em odd semi-clique} if it can be obtained from a clique on $2k+1$ vertices by deleting at most $k-1$ edges. Bonamy and Perrett asked if the edges of every connected graph $G$ on $n$ vertices can be decomposed into at most $\lfloor \frac{n}{2} \rfloor$ paths unless $G$ is an odd semi-clique. A graph $G$ is said to be \emph{ 2-degenerate} if every subgraph of $G$ has a vertex of degree at most $2$. In this paper, we prove that the edges of any connected 2-degenerate graph $G$ on $n$ vertices can be decomposed into at most $\lfloor \frac{n }{2} \rfloor$ paths unless $G$ is a triangle.
\end{abstract}
\section{Introduction}\label{sec:intro}

All graphs in this paper are simple, undirected and finite. A \emph{path decomposition} of a graph is a partition of the edge set of the graph into paths. Gallai made the following conjecture on path decomposition:

\begin{conjecture} [Gallai's path decomposition conjecture]\label{conjecture:gallai}
If $G$ is a connected graph on n vertices, then the edges of $G$ can be decomposed into at most $\lceil \frac{n }{2} \rceil$ paths.
\end{conjecture}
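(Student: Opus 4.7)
Since Gallai's conjecture is a classical open problem from 1966, any plan for the final statement must be programmatic rather than near-complete. The framework I would attempt combines a minimum-counterexample argument with Lov\'asz's edge-decomposition into paths and cycles, using the $2$-degenerate result of this paper as one resolved ingredient.

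Let $G$ be a minimum counterexample with respect to $n+|E(G)|$. I would first dispense with the easy reductions. If $G$ has a bridge $uv$, apply induction to the two components of $G-uv$ and concatenate the resulting decompositions through $uv$; a short count shows the number of paths is at most $\lceil n/2\rceil$. A pendant edge at a vertex $v$ extends a path of $G-v$, so $\delta(G)\ge 2$. Since the present paper settles the $2$-degenerate case, $G$ must contain a subgraph of minimum degree at least $3$; combined with $2$-edge-connectivity this places $G$ firmly in the dense regime where the conjecture is hard. I would also try to exploit odd-degree vertices, since they are forced to serve as path endpoints, arguing that if the number of odd-degree vertices is at least $2\lceil n/2\rceil$ then a trail decomposition already suffices.

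The central tool I would invoke is Lov\'asz's theorem: the edges of any $n$-vertex graph can be partitioned into at most $\lceil n/2\rceil$ paths and cycles taken together. Fix such an optimal partition $\mathcal{D}$ with $p$ paths and $c$ cycles, so $p+c\le \lceil n/2\rceil$. It then suffices to transform $\mathcal{D}$ so that $c=0$ without increasing the total count. For each cycle $C\in\mathcal{D}$, if some path $P\in\mathcal{D}$ has an endpoint on $V(C)$, cut $C$ at that endpoint and concatenate with $P$ to form a single path. If no path endpoint meets $C$, search for a path $P$ whose interior hits $C$ at some vertex $v$, split $P$ at $v$, and splice with $C$; with luck the two free stubs re-merge with other paths using shared endpoints, so that the cycle count strictly drops. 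Control of the net change would be organised via a potential function weighing cycles, endpoint multiplicities, and path-cycle overlaps.

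The principal obstacle — and the reason Gallai's conjecture has resisted attack for more than fifty years — is forcing this exchange process to terminate at $c=0$ within the global bound. In highly regular, Eulerian graphs with large vertex-disjoint cycle packings, path endpoints can be scarce on the very cycles one needs to break, and each exchange can create fresh cycles elsewhere; purely local moves then loop. A genuine proof seems to require either a new monovariant on the space of path-and-cycle decompositions that strictly decreases under a controlled exchange, or a reduction to structured subclasses (near-cliques, $k$-regular graphs, high-girth graphs, planar graphs) treated separately. My realistic expectation is that the general theorem is out of reach by this framework alone, and that cumulative progress on cycle-elimination — of which the 2-degenerate resolution in this paper is one slice — is the honest route forward.
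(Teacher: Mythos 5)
This statement is Gallai's conjecture itself, which the paper does not prove --- it is stated as an open conjecture and the paper only establishes the special case of $2$-degenerate graphs (Theorem~\ref{theorem:main}). Your proposal is candid that it is a research programme rather than a proof, and indeed it is not a proof: the decisive step --- converting an optimal Lov\'asz decomposition into paths and cycles (note: the bound there is $\lfloor n/2\rfloor$, not $\lceil n/2\rceil$) into a pure path decomposition without increasing the count --- is exactly the open core of the problem, and you supply no monovariant or potential function that makes the exchange process terminate. The phrase ``with luck the two free stubs re-merge'' is where the argument ends; in Eulerian graphs every vertex has even degree, a decomposition may contain no path endpoints at all on a given cycle, and splitting a path at an interior vertex creates two paths, so the naive splice increases the count by one for each cycle destroyed.

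Two of your ``easy reductions'' also have holes worth flagging. The bridge reduction requires that the inductive decompositions of the two sides each contain a path \emph{ending} at the bridge endpoint, which need not happen (e.g.\ if that endpoint has even degree in its component); one must instead decompose $G_i$ together with the bridge as a pendant edge and track the parity of $n_1,n_2$ carefully. And the claim that $2\lceil n/2\rceil$ odd-degree vertices yields the result via a ``trail decomposition'' conflates trails with paths: a graph with $2k$ odd vertices decomposes into $k$ trails, but a trail may repeat vertices and is not a path, so this does not give the conjectured bound. By contrast, the paper's actual contribution proceeds entirely by a minimum-counterexample analysis on the $2$-degenerate structure (a vertex removal order with degrees at most $2$), isolating low-degree vertices by deleting a carefully chosen path or short cycle and handling the triangle components that arise (Lemma~\ref{lemma:triangles}); it makes no use of Lov\'asz's paths-and-cycles theorem. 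If your goal is the special case the paper proves, you should target that structure directly; if your goal is the full conjecture, the proposal correctly identifies, but does not overcome, the known obstruction.
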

\cite{MR0233723} proved that the edges of a connected graph $G$ on $n$ vertices can be decomposed into at most $\lfloor \frac{n }{2} \rfloor$ paths and cycles. An \emph{even subgraph} $I$ of $G$ is defined as a subgraph induced by all the vertices having an even degree in $G$. \cite{MR0233723} proved the conjecture for graphs whose even subgraph consists of a single vertex. \cite{Pyber1996} later proved that the conjecture holds if the even subgraph of a graph is a forest. \cite{fan2005path} proved that the conjecture holds if each block of the even subgraph is a triangle-free graph of maximum degree at most three. \cite{https://doi.org/10.1002/jgt.22647} generalized the result of \cite{fan2005path}. \\ 
\cite{bonamy2019gallai} proved Gallai's conjecture for all graphs with maximum degree at most five. A graph is said to be an {\em odd semi-clique} if it can be obtained from a clique on $2k+1$ vertices by deleting at most $k-1$ edges. \cite{bonamy2019gallai} asked if the edges of every connected graph $G$ on $n$ vertices can be decomposed into at most $\lfloor \frac{n}{2} \rfloor$ paths unless $G$ is an odd semi-clique. \cite{botler2020gallai} proved that the edges of any connected graph $G$ with treewidth at most 3 can be decomposed into at most $\lfloor \frac{n}{2} \rfloor$ paths unless $G$ is $K_3$ or $K_5-e$. \cite{botler2019gallai} proved that the edges of triangle-free planar graphs can be decomposed into at most $\lfloor \frac{n}{2} \rfloor$ paths. More recently, \cite{Chu2021} proved that the edges of any connected graph $G$ with maximum degree $6$ in which the vertices of degree $6$ form an independent set can be decomposed into at most $\lfloor \frac{n}{2} \rfloor$ paths, unless $G$ is $K_3$, $K_5$ or $K_5-e$. 
\\

\noindent \textbf{Our result:}
A graph $G$ is said to be \emph{k-degenerate} if every subgraph of $G$ has a vertex of degree at most $k$. In this paper, we study the path decomposition in 2-degenerate graphs. The class of 2-degenerate graphs properly include outer-planar graphs, series-parallel graphs and planar graphs of girth at least 5 as subclasses. We prove that:

\begin{theorem} \label{theorem:main}
Let $G$ be a connected 2-degenerate graph on $n$ vertices. Then the edges of $G$ can be decomposed into at most $\lfloor \frac{n}{2} \rfloor$ paths unless $G$ is a triangle. 
\end{theorem}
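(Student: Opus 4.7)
The plan is to proceed by induction on $n$, verifying small base cases directly: for $n = 3$ the only connected 2-degenerate graphs are $K_3$ (excluded by hypothesis) and $P_3$, which needs a single path. For the inductive step I would exploit 2-degeneracy to select a vertex $v$ with $\deg_G(v) \leq 2$, reduce to a strictly smaller connected 2-degenerate graph $G'$, apply the inductive hypothesis, and then reintegrate $v$ into the obtained decomposition while respecting the bound $\lfloor n/2 \rfloor$.

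The case analysis would branch on $\deg(v)$ and the local structure at $v$: (i) $\deg(v) = 1$ with neighbor $u$, taking $G' = G - v$; (ii) $\deg(v) = 2$, $N(v) = \{u, w\}$, $uw \notin E(G)$, where I would \emph{suppress} $v$ by replacing the path $u$-$v$-$w$ with the edge $uw$ to form $G'$ on $n-1$ vertices (one checks $G'$ remains connected and 2-degenerate, since any subgraph of $G'$ either avoids the new edge $uw$ and is already a subgraph of $G$, or corresponds to a subgraph of $G$ with $v$ reinstated as a degree-2 vertex); and (iii) $\deg(v) = 2$ with $uvw$ a triangle, again taking $G' = G - v$. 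In each case the decomposition of $G'$ must be lifted using the one or two edges incident to $v$, either by extending a path already ending at $u$ or $w$, or by introducing at most one new path of length one or two.

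The central obstacle is a parity-accounting issue, compounded by the triangle exception. Since $\lfloor n/2 \rfloor - \lfloor (n-1)/2 \rfloor$ equals $1$ when $n$ is even and $0$ when $n$ is odd, for odd $n$ no additional path can be introduced when reattaching $v$: some path of the decomposition of $G'$ must already end at $u$ (and in case (iii) also at $w$) so that it may be extended through $v$. A vertex is forced to be an endpoint of some path only when its degree in the decomposed graph is odd, which is not always so. I therefore expect the proof to require a strengthened inductive hypothesis asserting the existence of a decomposition of $G'$ in which certain prescribed vertices (dictated by the chosen reduction) are endpoints of paths. Carrying this strengthening faithfully through the case analysis, together with a delicate treatment of the boundary subcases where $G'$ itself happens to be a triangle or the suppression creates a new triangle in $G'$, is where I expect the bulk of the technical work to lie; if the straightforward strengthening fails, one may need to choose the low-degree vertex $v$ more carefully, using the freedom afforded by 2-degeneracy to locate a \emph{good} vertex whose neighborhood meets a suitable degree-parity condition.
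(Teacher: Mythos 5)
Your outline correctly isolates the real difficulty --- when $n$ is odd, deleting a single vertex does not decrease $\lfloor n/2\rfloor$, so no new path may be introduced when $v$ is reattached --- but it does not overcome it: you explicitly defer the resolution to a ``strengthened inductive hypothesis'' prescribing path endpoints, which you neither formulate nor verify, and this is precisely where the proof lives. The paper resolves the parity problem by a different device: working with a minimum counterexample, it first shows (via a lemma that absorbs any triangle components created along the way) that $G$ cannot contain \emph{two} vertices of degree at most $2$. The mechanism is to find a single path (or short cycle) whose edge set covers all edges incident to both low-degree vertices, so that its removal isolates \emph{two} vertices at once; the vertex count then drops by $2$, the bound $\lfloor n/2\rfloor$ drops by exactly $1$, and that one unit pays for the new path regardless of parity. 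The remaining case --- a unique vertex $v$ of degree at most $2$ --- is handled by a structural analysis of $v$, its degree-$3$ neighbour $x$, and various cut-vertex configurations, again always arranging for a removed path to isolate two vertices. No strengthening of the inductive statement is ever needed. Your proposal contains no analogue of this two-vertices-at-a-time idea, so the inductive step is genuinely incomplete as written.

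There is also a concrete error in your case (ii). Suppressing a degree-$2$ vertex $v$ with non-adjacent neighbours $u,w$ (replacing $u$-$v$-$w$ by the edge $uw$) does \emph{not} in general preserve 2-degeneracy: take $K_4$ and subdivide one edge with $v$; the result is 2-degenerate, but suppressing $v$ returns $K_4$, which is not. The flaw in your parenthetical justification is that the subgraph of $G$ obtained by reinstating $v$ may have $v$ as its \emph{only} vertex of degree at most $2$, which tells you nothing about the corresponding subgraph of $G'$. So even the reduction graph $G'$ in that branch need not lie in the class to which the inductive hypothesis applies.
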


Note that any triangle requires two paths. We can even extend the result to 2-degenerate graphs which are not connected, but no component being a triangle. We have the following corollary:

\begin{corollary}\label{corollary:maincorollary}
Let $G$ be a 2-degenerate graph on $n$ vertices such that none of the components of $G$ is a triangle. Then the edges of $G$ can be decomposed into at most $\lfloor \frac{n}{2} \rfloor$ paths.
\end{corollary}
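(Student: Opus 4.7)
The plan is to reduce Corollary~\ref{corollary:maincorollary} to Theorem~\ref{theorem:main} by treating each connected component separately. Let $C_1,\ldots,C_k$ be the connected components of $G$, with $n_i = |V(C_i)|$, so that $\sum_{i=1}^k n_i = n$. Since 2-degeneracy is closed under taking subgraphs, each $C_i$ is itself a connected 2-degenerate graph, and by the hypothesis of the corollary, no $C_i$ is a triangle. Therefore Theorem~\ref{theorem:main} applies to each $C_i$ and produces a path decomposition $\mathcal{P}_i$ of $C_i$ with $|\mathcal{P}_i| \le \lfloor n_i/2 \rfloor$. (For degenerate cases, a component on one vertex contributes no paths and satisfies $0 \le \lfloor 1/2 \rfloor$, and a component that is a single edge contributes one path and matches $\lfloor 2/2 \rfloor = 1$.)

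Since the components are pairwise vertex-disjoint, their edge sets are disjoint, and the union $\mathcal{P} = \bigcup_{i=1}^k \mathcal{P}_i$ is a valid path decomposition of $G$. The count is bounded by
\[
|\mathcal{P}| \;=\; \sum_{i=1}^{k} |\mathcal{P}_i| \;\le\; \sum_{i=1}^k \left\lfloor \frac{n_i}{2} \right\rfloor \;\le\; \left\lfloor \frac{\sum_{i=1}^k n_i}{2} \right\rfloor \;=\; \left\lfloor \frac{n}{2} \right\rfloor,
\]
where the middle inequality is the standard subadditivity $\lfloor a/2\rfloor + \lfloor b/2\rfloor \le \lfloor (a+b)/2 \rfloor$ applied inductively. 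This yields the desired bound.

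There is essentially no obstacle to this argument beyond correctly invoking Theorem~\ref{theorem:main} on each component and the elementary inequality above; all the real work has already been done in proving Theorem~\ref{theorem:main} itself. The only hypothesis-sensitive step is excluding triangles as components, which is precisely the assumption made in the corollary's statement.
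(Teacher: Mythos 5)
Your proposal is correct and is essentially the argument the paper intends (the paper simply states the corollary as an immediate consequence of Theorem~\ref{theorem:main} applied componentwise): decompose each non-triangle connected component $C_i$ via the theorem and sum the bounds using $\lfloor n_1/2\rfloor + \cdots + \lfloor n_k/2\rfloor \le \lfloor (n_1+\cdots+n_k)/2\rfloor$. Nothing is missing.
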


\section{Preliminaries}\label{sec:Preliminaries}
The degree of a vertex $v$ in a graph $G=(V,E)$ is denoted by $d_G(v)$. A vertex of degree $0$ is said to be \emph{isolated}. The set of neighbours of a vertex $v$ in $G$ is denoted by $N_G(v)$. A \emph{walk} is a non-empty alternating sequence $v_0,e_0,v_1,e_1, ...,e_{k-1},v_k$ of vertices and edges in a graph, such that any edge in the sequence has the vertex immediately before and after it as its endpoints in the graph. A \emph{path} is a walk in which all vertices and edges are distinct. The number of edges in a path $P$ is called its \emph{length} and is denoted as $|P|$. The notation $v_iPv_j$ denotes the subpath of the path $P$ starting at the vertex $v_i$ and ending at the vertex $v_j$. For a path $P$ and an edge $xy \notin P$ and $xy$ incident on one of the endpoints of $P$, $P \cup xy$ denotes the extended path obtained by adding $xy$ to $P$. A \emph{closed walk} is a walk that starts and ends at the same vertex. A \emph{cycle} is a closed walk where all the edges are distinct, and the only repeated vertices are the first and the last vertices. The number of edges in a cycle $C$ is called its \emph{length} and is denoted as $|C|$. For an edge $xy \in E$, where $x,y \in V$, $G-xy$ will denote the graph obtained by deletion of the edge $xy$. Similarly, if $F$ denotes any subset of edges of the edge set $E$, then $G - F$ denotes the graph obtained after the removal of all the edges of $F$ from $G$. For a vertex $x \in V$, $G-x$ will denote the graph obtained by deletion of the vertex $x$ and all the edges incident on $x$. Similarly, the graph $ G - \{x_0,x_1,..., x_i\}$ denotes the graph obtained if multiple vertices $x_0, x_1,..., x_i$  and the edges incident on them are removed from $G$. An \emph{odd component} is a component of a graph with an odd number of vertices. Similarly, an \emph{even component} is a component of a graph with an even number of vertices. We say that a graph $G$ on $n$ vertices has a \emph{valid decomposition} if there exists a decomposition of the edges of $G$ into at most $\lfloor \frac{n}{2} \rfloor$ paths. For any missing definitions, please refer \citet{diestel2005graph}.

\begin{definition}[Vertex removal order]\label{definition:vertexorder}
Let  $x_1, x_2, ..., x_n$ be an ordering of vertices of $G$. We denote by $H_i$, the graph obtained by removing vertices  $x_1, ..., x_{i-1}$ from $G$. If $G$ is a 2-degenerate graph on $n$ vertices, then there exists an ordering $x_1, x_2, ..., x_n$ of vertices such that $d_{H_i}(x_i) \leq 2$, for every $ i$, $  1 \leq i \leq n $
\end{definition}

\section{Proof of Theorem 1}\label{sec:Proof}

Let $G$ be a minimum counterexample to the theorem statement with respect to the  number of vertices. It is easy to see that $n > 3$ since any connected graph with at most three vertices is either a path or a triangle, and the theorem  holds in those cases.

\begin{observation}\label{observation:subgraph}
Let $H$ be a subgraph of $G$ with $n - i$ non-isolated vertices, for $i >0$, and suppose that none of the components in $H$ is a triangle. Since $G$ is a minimum counterexample, the edges of $H$ can be decomposed into at most $\lfloor \frac{n - i}{2} \rfloor$ paths.
\end{observation}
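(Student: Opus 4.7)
The plan is to deduce this from the minimality of $G$ by decomposing each connected component of $H$ individually. First I would note that $H$ is 2-degenerate, since every subgraph of a 2-degenerate graph is itself 2-degenerate (the degeneracy property is inherited). Next, I would discard the isolated vertices of $H$, which contribute no edges, and list the connected components $C_1, C_2, \ldots, C_t$ of the remaining graph, letting $n_j$ denote the number of vertices in $C_j$. By assumption, $\sum_{j=1}^{t} n_j = n - i < n$ since $i > 0$, so in particular each $n_j < n$.

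Each $C_j$ is a connected 2-degenerate graph on fewer than $n$ vertices, and by hypothesis none of the $C_j$ is a triangle. Since $G$ was chosen as a minimum counterexample (with respect to the number of vertices), the theorem holds for $C_j$, and hence the edges of $C_j$ decompose into at most $\lfloor n_j / 2 \rfloor$ paths. Taking the union of these decompositions over all components gives a decomposition of $E(H)$ into at most $\sum_{j=1}^{t} \lfloor n_j/2 \rfloor$ paths.

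Finally I would apply the elementary inequality $\sum_{j} \lfloor n_j / 2 \rfloor \leq \lfloor (\sum_{j} n_j)/2 \rfloor$ (the floor of a sum is at least the sum of the floors) to conclude that the total number of paths is at most $\lfloor (n-i)/2 \rfloor$, as required. There is no real obstacle here: the only point worth double-checking is that the hypothesis ``no component of $H$ is a triangle'' is exactly what is needed to invoke the inductive statement of Theorem~\ref{theorem:main} on every component, and that the condition $i > 0$ is what guarantees each component has strictly fewer than $n$ vertices so minimality applies.
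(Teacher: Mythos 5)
Your proof is correct and is exactly the (implicit) justification the paper intends for this Observation, which it states without proof: apply minimality of the counterexample to each non-triangle connected component separately and combine via $\sum_j \lfloor n_j/2 \rfloor \leq \lfloor (\sum_j n_j)/2 \rfloor$. The points you flag for checking --- heredity of 2-degeneracy, the role of the no-triangle hypothesis, and $i>0$ ensuring each component has fewer than $n$ vertices --- are precisely the right ones.
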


\begin{lemma}\label{lemma:triangles}
Let $P$ be a path in the graph $G$. Let $T_1$, $T_2$,..., $T_j$,  for  $j \geq 0$, be the components of $G - E(P)$ that are triangles. Then the edges of $P$ and the components $T_1$, $T_2$,..., $T_j$ can be decomposed into $j+1$ paths.
\end{lemma}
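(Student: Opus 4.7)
Plan: I would prove this lemma by induction on $j$. The base case $j = 0$ is immediate, since $P$ itself is a single path and $j + 1 = 1$. A useful preliminary observation is that, because $G$ is connected with $n > 3$ in the context of the minimum counterexample, every triangle component $T_i$ of $G - E(P)$ must share at least one vertex with $V(P)$; otherwise the three vertices of $T_i$ would form a connected component of $G$ disjoint from $V(P)$, contradicting connectivity. This property carries over to subgraphs obtained by deleting a vertex-disjoint triangle, so it will remain available throughout the induction.

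For the inductive step with $j \geq 1$, I would apply the inductive hypothesis to the graph $G' = G - E(T_j)$ with the same path $P$; in $G' - E(P)$ the triangle components are precisely $T_1, \ldots, T_{j-1}$, each of which still shares a vertex with $P$. This yields a decomposition $\mathcal{D}$ of $E(P) \cup \bigcup_{i<j} E(T_i)$ into $j$ paths. To produce the required $j+1$ paths I would then incorporate the three edges of $T_j$ at the cost of exactly one new path. Let $u \in V(T_j) \cap V(P)$, write $V(T_j) = \{u, a_j, b_j\}$, and let $Q \in \mathcal{D}$ be the path containing $u$. If $u$ is an endpoint of $Q$, I extend $Q$ by one triangle edge (say $u a_j$) and add the remaining two-edge path $a_j, b_j, u$ as a new path. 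If $u$ is internal to $Q$, I split $Q$ at $u$ into $Q^L, Q^R$ and form two replacement paths by distributing the three triangle edges across the split, for example $(Q^L, u, a_j, b_j)$ using $u a_j$ and $a_j b_j$, and $(b_j, u, Q^R)$ using $b_j u$.

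The main obstacle will be verifying that each constructed path is simple, which requires a case analysis on $\rho(T_j) := |V(T_j) \cap V(P)|$. For $\rho(T_j) = 1$, both $a_j$ and $b_j$ lie off $P$, so no repetition can occur. For $\rho(T_j) = 2$, exactly one of $a_j, b_j$ lies on $P$ and thus possibly on some path of $\mathcal{D}$; between the two symmetric ways of distributing the three triangle edges across the split (using $u a_j$ versus $u b_j$ as the first extending edge), at least one avoids any repeated vertex, as a direct case analysis based on which of $V(Q^L), V(Q^R)$ contains the second $P$-vertex of $T_j$ shows. The most delicate case is $\rho(T_j) = 3$, in which all three triangle edges are chords of $P$ and a direct split at $u$ may force $a_j$ or $b_j$ to appear twice in the same new path. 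I would handle this by splitting $Q$ at a $P$-neighbour of $u$ instead and rerouting via the chords, following the pattern of the explicit two-path decomposition $P_1 = x_0, \ldots, x_\ell, x_p, x_m, x_{m-1}, \ldots, x_{\ell+1}$ and $P_2 = x_{\ell+1}, x_\ell, x_m, x_{m+1}, \ldots, x_k$ that handles a single $\rho = 3$ triangle on $\{x_\ell, x_m, x_p\}$, and then splicing this local decomposition into $\mathcal{D}$.
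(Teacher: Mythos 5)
Your inductive skeleton (induct on $j$, absorb one triangle at the cost of one extra path) matches the paper's, but you reverse the order of operations: you apply the inductive hypothesis first, obtaining an uncontrolled decomposition $\mathcal{D}$ of $E(P)\cup\bigcup_{i<j}E(T_i)$, and only then try to splice in $T_j$. This creates a genuine gap in the case $\rho(T_j)=3$. Your proposed repair for that case is to reuse the explicit two-path decomposition $P_1,P_2$ written in terms of the vertex sequence $x_0,\dots,x_k$ of $P$, but after the inductive step the path $Q\in\mathcal{D}$ containing $u$ is in general not $P$: the segments of $P$ between the three triangle vertices may have been cut up, reversed, and distributed among several paths of $\mathcal{D}$ by the earlier splices for $T_1,\dots,T_{j-1}$, and the ``$P$-neighbour of $u$'' need not even be adjacent to $u$ on $Q$. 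So the formula you cite is simply not applicable to $Q$. Moreover the difficulty is real, not just delicate: if all three vertices of $T_j$ happen to lie on a single path $Q$ of $\mathcal{D}$, one of them lies between the other two on $Q$, and then every ``split $Q$ at a triangle vertex and distribute the three chords'' surgery either repeats a vertex or leaves a stranded third path, for every choice of pivot among the three vertices. Your $\rho(T_j)\le 2$ analysis is fine (there the off-$P$ vertex of $T_j$ lies on no path of $\mathcal{D}$, and the at most one on-$P$ non-pivot vertex lies on at most one side of the split), but $\rho(T_j)=3$ is exactly where structural information about the host path is needed, and your ordering has destroyed it.

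The paper avoids this by doing the surgery \emph{before} inducting: it chooses $T_j$ to be the first triangle met while walking along $P$ from an endpoint, explicitly decomposes $E(P)\cup E(T_j)$ into two paths $Q$ and $R$ using the known linear order of the triangle's vertices on $P$ (this is where the $P_1,P_2$-style formula legitimately applies), checks that $G-E(Q)$ has no triangle components, and then applies the inductive hypothesis to the path $R$ in $G-E(Q)$, whose triangle components are exactly $T_1,\dots,T_{j-1}$. To fix your argument you would either have to adopt this order, or strengthen the inductive hypothesis so that the decomposition it returns keeps the relevant $P$-segments intact; as written, the $\rho(T_j)=3$ step does not go through.
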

\begin{proof}
We prove the  statement using induction on the number of triangle components. Clearly, if $j=0$, the statement holds. Suppose the statement holds for $j-1$ triangles. Now we prove that the statement holds if $G-E(P)$ has $j$ triangle components $T_1, T_2,..., T_j$. Let the endpoints of the path $P$ be vertices $a$ and $b$. While travelling along $P$, starting at $a$, let $x$ be the vertex in $P$ where a triangle first intersects $P$. Without loss of generality, let $T_j$ be this triangle. Let $x$, $y$ and $z$ be the vertices of $T_j$. The triangle $T_j$ may intersect $P$ at one, two or three vertices. This gives us the following three cases. In all these cases, we decompose the edges of $P \cup T_j$ into two paths $Q$ and $R$, each of which will be used later. 
\begin{case}\label{case:lc1} $P$ intersects $T_j$ at three vertices.
\end{case}
Let the vertices of $T_j$ intersect the path $P$ in the order $x$, $y$, $z$.  See Figure~\ref{fig:triangles3}. We can define two paths $Q$ and $R$, such that $Q = aPx \cup xy \cup yz \cup zw$, where $w$ is a neighbour of $z$ and $zw$ $\in$ $zPy$ and $R = wPy \cup yPx \cup xz \cup zPb$.
\\
\begin{figure}[h]
\centering
\begin{subfigure}[b]{0.41\textwidth}\centering
\begin{tikzpicture}[scale = 0.4, node distance={15mm}, main/.style = {draw, circle}]
\node[main] (1) {$a$};
\node[main] (2) [below of=1] {$x$};
\node[main] (3) [below left of=2] {$y$};
\node[main] (4) [below right of=2] {$z$};
\node[main] (5) [ below right of=4] {$b$};
\node[main] (6) [ below left of=4] {$w$};
\draw (2) -- (3);
\draw (2) -- (4);
\draw (3) -- (4);
\draw [dashed] (2) to [out=75,in=130,looseness=1] (3);
\draw [dashed] (1) to [out=270,in=50,looseness=1.5] (2);
\draw [dashed] (3) to [ out=270,in=134,looseness=1.5] (6);
\draw [dashed] (4) to [ out=270,in=234,looseness=1.5] (5);
\draw  [dashed](4) to (6);
\end{tikzpicture}\caption{} \label{fig:triangles3va} \end{subfigure}\hspace{0.5cm}\begin{subfigure}[b]{0.41\textwidth}\centering\begin{tikzpicture}[scale = 0.6, node distance={15mm}, main/.style = {draw, circle}]
\node[main] (1) {$a$};
\node[main] (2) [below of=1] {$x$};
\node[main] (3) [below left of=2] {$y$};
\node[main] (4) [below right of=2] {$z$};
\node[main] (5) [ below right of=4] {$b$};
\node[main] (6) [ below left of=4] {$w$};
\draw [thin] (2) -- (3);
\draw [thick, line width=2.5pt] (2) -- (4);
\draw [thin] (3) -- (4);
\draw [ thick, line width=2.5pt] (2) to [out=75,in=130,looseness=1] (3);
\draw [ thin] (1) to [out=270,in=50,looseness=1.5] (2);
\draw [ thick, line width=2.5pt] (3) to [ out=270,in=134,looseness=1.5] (6);
\draw [thick, line width=2.5pt] (4) to [ out=270,in=234,looseness=1.5] (5);
\draw [ thin] (4) to (6);
\end{tikzpicture}\caption{} \label{fig:triangles3vb}
\end{subfigure}
\caption{ Three vertices of triangle $T_j$  intersect $P$. (a) The path $P$ is shown with a dashed line, while the edges of $T_j$ are shown in solid lines. (b) Decomposition of the edges of $T_j$ and $P$ into two paths $Q$ (thin line) and $R$ (bold line). } \label{fig:triangles3}
\end{figure}
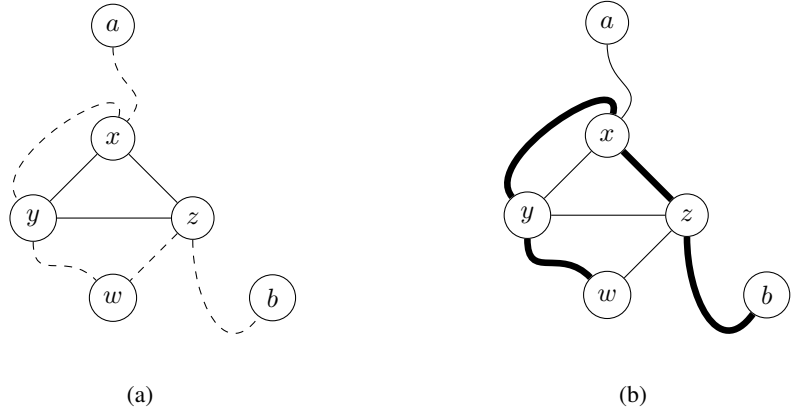
\begin{case}\label{case:lc2}  $P$ intersects $T_j$ at two vertices.
\end{case}
Let the vertices of $T_j$ intersect the path $P$ in the order $x$, $y$. See Figure~\ref{fig:triangles2}. We can define two paths $Q$ and $R$, such that $Q= aPx \cup xy \cup yw$, where $w$ is a neighbour of $y$ and $yw \in xPy$ and  $R = wPx \cup xz \cup zy \cup yPb$. 
%two vertices of triangle intersect P
\begin{figure}[h]
\centering
\begin{subfigure}[b]{0.41\textwidth}\centering
\begin{tikzpicture}[scale = 0.4, node distance={15mm}, main/.style = {draw, circle}]
\node[main] (1) {$a$};
\node[main] (2) [below of=1] {$x$};
\node[main] (3) [below left of=2] {$y$};
\node[main] (4) [below right of=2] {$z$};
\node[main] (5) [ below right of=3] {$b$};
\node[main] (6) [ above left of=3] {$w$};
\draw (2) -- (3);
\draw (2) -- (4);
\draw (3) -- (4);
\draw [dashed] (2) to [out=175,in=60,looseness=1] (6);
\draw [dashed] (1) to [out=270,in=50,looseness=1.5] (2);
\draw [dashed] (3) to [ out=270,in=234,looseness=1.5] (5);
\draw [dashed](3) to (6);
\end{tikzpicture}\caption{} \label{fig:triangles2va} \end{subfigure}\hspace{0.5cm}\begin{subfigure}[b]{0.41\textwidth}\centering\begin{tikzpicture}[scale = 0.6, node distance={15mm}, main/.style = {draw, circle}]
\node[main] (1) {$a$};
\node[main] (2) [below of=1] {$x$};
\node[main] (3) [below left of=2] {$y$};
\node[main] (4) [below right of=2] {$z$};
\node[main] (5) [ below right of=3] {$b$};
\node[main] (6) [ above left of=3] {$w$};
\draw [thin] (2) -- (3);
\draw [thick, line width=2.5pt](2) -- (4);
\draw [thick, line width=2.5pt] (3) -- (4);
\draw [thick, line width=2.5pt] (2) to [out=175,in=60,looseness=1] (6);
\draw [thin] (1) to [out=270,in=50,looseness=1.5] (2);
\draw [thick, line width=2.5pt] (3) to [ out=270,in=234,looseness=1.5] (5);
\draw [thin ] (3) to (6);
\end{tikzpicture}\caption{} \label{fig:triangles2vb}
\end{subfigure}
\caption{ Two vertices of triangle $T_j$  intersect $P$. (a) The path $P$ is shown with a dashed line, while the edges of $T_j$ are shown in solid lines. (b) Decomposition of the edges of $T_j$ and $P$ into two paths $Q$ (thin line) and $R$ (bold line). } \label{fig:triangles2}
\end{figure}
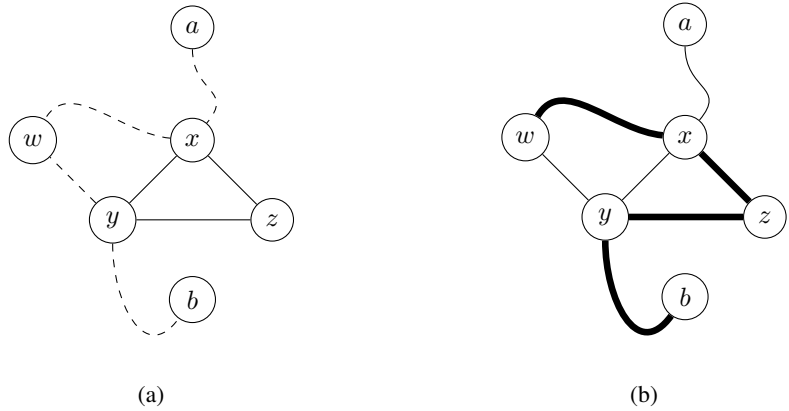
\begin{case}\label{case:lc3}  $P$ intersects $T_j$ at one vertex.
\end{case}
Let the vertex of $T_j$ that intersects the path  $P$ be $x$. See Figure~\ref{fig:triangles1}. We can define two paths $Q$ and $R$ such that $Q = aPx \cup xy \cup yz$ and $R = zx \cup xPb$.  
%One vertex

\begin{figure}[h]
\centering
\begin{subfigure}[b]{0.41\textwidth}\centering
\begin{tikzpicture}[scale = 0.4, node distance={15mm}, main/.style = {draw, circle}]
\node[main] (1) {$a$};
\node[main] (2) [below of=1] {$x$};
\node[main] (3) [below left of=2] {$y$};
\node[main] (4) [below right of=2] {$z$};
\node[main] (5) [ right of=2] {$b$};
\draw (2) -- (3);
\draw (2) -- (4);
\draw (3) -- (4);
\draw [dashed] (2) to [out=10,in=170,looseness=1] (5);
\draw [dashed] (1) to [out=270,in=50,looseness=1.5] (2);
\end{tikzpicture}\caption{} \label{fig:triangles1va} \end{subfigure}\hspace{0.5cm}\begin{subfigure}[b]{0.41\textwidth}\centering\begin{tikzpicture}[scale = 0.6, node distance={15mm}, main/.style = {draw, circle}]
\node[main] (1) {$a$};
\node[main] (2) [below of=1] {$x$};
\node[main] (3) [below left of=2] {$y$};
\node[main] (4) [below right of=2] {$z$};
\node[main] (5) [ right of=2] {$b$};
\draw [thin] (2) -- (3);
\draw [thick, line width=2.5pt](2) -- (4);
\draw [thin] (3) -- (4);
\draw [thick, line width=2.5pt] (2) to [out=10,in=170,looseness=1] (5);
\draw [thin] (1) to [out=270,in=50,looseness=1.5] (2);
\end{tikzpicture}\caption{} \label{fig:triangles1vb}
\end{subfigure}
\caption{ One vertex of triangle $T_j$  intersects $P$. (a) The path $P$ is shown with a dashed line, while the edges of $T_j$ are shown in solid lines. (b) Decomposition of the edges of $T_j$ and $P$ into two paths $Q$ (thin line) and $R$ (bold line). } \label{fig:triangles1}
\end{figure}
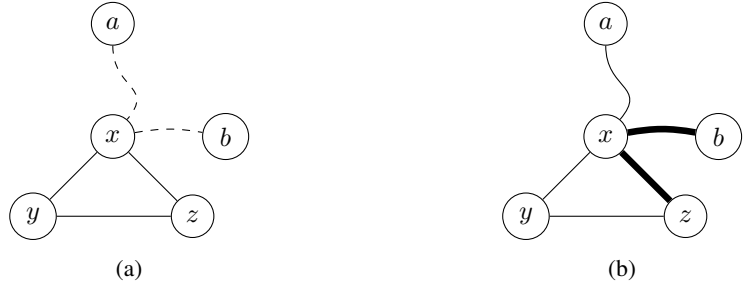

Let $G' = G- E(Q)$. It is easy to see that $G'$ does not have any triangle components using the following argument: since $T_j$ is the first triangle to intersect $P$, the removal of $aPx$ does not produce any other triangle components. The removal of the rest of the edges in the path $Q$ does not create any new triangle components either, because in Case~\ref{case:lc1} and Case \ref{case:lc2}, the vertices $x$,$y$, $z$ and $w$ are still connected to the path $R$ and in Case \ref{case:lc3}, $x$ and $z$ are still connected to the path $R$, while $y$ becomes isolated. \\ \\
Path $Q$ does not contain edges from triangles $T_1, T_2, ..., T_{j-1}$. Let $G'' = G'- E(R) = G - E(Q \cup R)$. Note that $G'' = G - E(P \cup T_{j})$.  Therefore, the removal of the path $R$ from $G'$ will create $j-1$ triangle components $T_1, T_2, ..., T_{j-1}$. By inductive hypothesis, the edges of $R$ and the edges of  $T_1, T_2, ..., T_{j-1}$ can be decomposed into  $j$ paths. These $j$ paths, along with the path $Q$, give us the required $j+1$ paths.
 \end{proof}
\begin{claim_new}\label{lemma:deg12}
$G$ does not contain two vertices with degree at most 2.

\end{claim_new}

\begin{proof}
Let $u$ and $v$ be the closest pair of vertices in $G$ with degrees at most $2$. Our proof relies on removing the edges of a path or a cycle that isolates the vertices $u$ and $v$. In each case  of the proof that follows, we will attempt to get a valid decomposition of the edges of $G$, using a valid decomposition of the edges of a subgraph of $G$ having at most $n-2$ non-isolated vertices. \\

Let $P'$ be a shortest path connecting vertices $u$ and $v$. Suppose $|P'| >2$. If $d_G(u) =1$ and $d_G(v) = 1$, then let $P = P'$. Otherwise, if either or both of $u$ and $v$ have degree 2, then extend the path $P'$ to include all the edges incident on $u$ and $v$. Let $P$ be this extended path. Removal of the edges of $P$ from $G$ removes all the edges incident on $u$ and $v$, making them isolated.\\ \\
On the other hand, suppose $|P'| \leq 2$. If $d_G(u) =1$ and $d_G(v) = 1$, let $P = P'$. If either or both of $u$ and $v$ have degree 2, then extend the path $P'$ to include all the edges incident on $u$ and $v$ to get a walk $L$. Since $|P'| \leq 2$, we can infer that $|L| \leq 4$ and no edge is repeated. Thus the walk $L$ is a path or a cycle. If the walk $L$ is a path, let $P = L$. Otherwise, it is a cycle of length either $3$ or $4$. Note that $u$ and $v$ are isolated vertices in $G- E(L)$.
\\ \\
Now consider the case when $|P'| > 2$ or $|P'| \leq 2$ and $L$ is a path.  The path $P$ is such that $G - E(P) $ has at least two isolated vertices $u$ and $v$. Let  $j \geq 0$ be the number of triangle components of $G-E(P)$. Let $H$ denote the graph obtained from $G-E(P)$, after the $j$ triangle components are removed. $H$ has at most $n - 2 - 3j$ non-isolated vertices. Since the graph $G$ is a minimum counterexample, by Observation~\ref{observation:subgraph}, the edges in $H$ can be decomposed into at most $\lfloor \frac{n - 2 - 3j}{2} \rfloor$ paths. By Lemma~\ref{lemma:triangles}, the edges of the $j$ triangle components together with $P$ require $j+1$ paths. This implies that the edges of $G$ can be decomposed into at most $\lfloor \frac{n - 2 - 3j}{2} \rfloor+ j+1 \leq \lfloor \frac{n}{2} \rfloor$ paths.\\ \\ 
Now we are left with the case $|P'| \leq 2$ and $L$ is a cycle, which we denote as $C$.
\begin{subclaim}\label{claim:atmost1_cycle}
If the edges of $ G-E(C)$ can be decomposed into $k$ paths, then the edges of $G$ can be decomposed into at most $k+1$ paths.
\end{subclaim}

\begin{proof}
As mentioned before, the length of $C$ is either $3$ or $4$. We will consider these cases separately. Let $W$ be a path in the path decomposition of $G - E(C)$ that intersects $C$ in $G$. Let vertices $a$ and $b$ be the two endpoints of this path $W$. We will demonstrate that the edges of $W$ and $C$ together can be decomposed into at most $2$ paths in $G$. If the length of cycle $C$ is $4$, let $x$ and $y$ be the common neighbours of $u$ and $v$. The path $W$ can intersect $C$ in $G$ at either one or both of $x$ and $y$. If the path $W$ intersects $C$ at both $x$ and $y$ (in that order), we can define two paths $W_1$ and $W_2$, such that $W_1 = aWx \cup xv \cup vy \cup yu$ and $W_2 = ux \cup xWy \cup yWb$. If the path $W$ intersects $C$ at only one vertex (let this be $x$), we can define two paths $W_1$ and $W_2$ such that $W_1 = aWx \cup xv \cup vy \cup yu$ and $W_2 = ux \cup xWb$. \\ \\ 
If the cycle $C$ is of length 3, $W$ can intersect $C$ at exactly one vertex (let this be $x$). We can define two paths $W_1 = aWx \cup xv \cup vu $ and $W_2 = ux \cup xWb$. Therefore, the edges of $W$ and $C$ together can be decomposed into at most two paths in $G$. In other words, if the edges of $G - E(C)$ can be decomposed into $k$ paths, then the edges of $G$ can be decomposed into at most $k+1$ paths.  
\end{proof}

\begin{subclaim}\label{claim:cycle_triangle}
Let $T$ be a triangle component of $G - E(C)$. Then, the edges of $T$ and $C$ together can be decomposed into at most two paths.
\end{subclaim}

\begin{proof}
As mentioned earlier, $C$ is either of length $3$ or $4$. The triangle $T$ and the cycle $C$ can have either one or two vertices in common. If $T$ and $C$ have one vertex in common, then there exist two adjacent vertices of degree $2$ in $T$ and hence, in $G$. By definition, $u$ and $v$ are the closest pair of vertices in $G$, whose degrees are at most $2$. Therefore, $u$ and $v$ also have to be adjacent. So, $C$ can only be of length $3$ here. Hence, if at most one vertex in $T$ intersects $C$, the resulting graph is a graph with two triangles intersecting at a common vertex, say $x$. It is easy to see that the edges in this graph can be decomposed into at most two paths. If $T$ and $C$ have two common vertices, the length of $C$ has to be $4$, because $u$ and $v$ cannot be part of $T$. Let the common vertices in $C$ and $T$ be $x$ and $y$. Let $a$ be the third vertex in $T$. It is easy to see that the edges of $C$ and $T$ together require at most two paths : namely, $P_1 = vy \cup ya \cup ax \cup xu$ and $P_2 = uy \cup yx \cup xv$.  
\end{proof}

If $G - E(C)$ has no triangle component, then by Subclaim~\ref{claim:atmost1_cycle} and Observation~\ref{observation:subgraph}, the edges of $G$  can be decomposed into at most $\lfloor \frac{n}{2} \rfloor$ paths. Suppose $T$ is a triangle component of $G-E(C)$. Clearly, at most one triangle component is present in $G -E(C)$ when $|C| =3$ and that triangle component will contain the vertex $x$. Because $u$ and $v$ are the closest pair of vertices with degree at most $2$ in $G$, at most one triangle component is present in $G -E(C)$ when $|C| = 4$ and that triangle component will contain both the vertices $x$ and $y$. By Subclaim~\ref{claim:cycle_triangle}, the edges of $C$ and $T$  can be decomposed into at most $2$ paths. This implies that the edges of $G$ can be decomposed into at most $\lfloor \frac{n - 2 - 3}{2} \rfloor+ 2 \leq \lfloor \frac{n}{2} \rfloor$ paths.  
\end{proof}

\begin{observation}\label{observation:floor} Let $H$ be a graph on $n$ vertices that consists of two odd components $I$ and $J$ with $n_I$ and $n_J$ vertices respectively, with $n_I + n_J = n$, and suppose none of these components is a triangle. Suppose the  edges of $I$ and $J$ admit a path decomposition with at most $\lfloor \frac{n_I }{2} \rfloor$ and $\lfloor \frac{n_J }{2} \rfloor$ paths respectively. Then the edges of $H$ can be decomposed into at most $\lfloor \frac{n_I + n_J}{2} \rfloor -1 $ paths, which is one path less than the bound given by the conjecture for a graph with $n_I + n_J$ vertices.

\end{observation}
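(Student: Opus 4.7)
The plan is entirely arithmetic. First, I would note that since $I$ and $J$ are disjoint components of $H$, the union of any path decomposition of $I$ with any path decomposition of $J$ is itself a path decomposition of $H$. Applying the hypothesized bounds yields a decomposition of $H$ into at most $\lfloor n_I / 2 \rfloor + \lfloor n_J / 2 \rfloor$ paths.

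Next, I would exploit the parity assumption. Since both $n_I$ and $n_J$ are odd, $\lfloor n_I / 2 \rfloor = (n_I - 1)/2$ and $\lfloor n_J / 2 \rfloor = (n_J - 1)/2$, so their sum equals $(n_I + n_J)/2 - 1$. Because the sum of two odd numbers is even, this quantity coincides with $\lfloor (n_I + n_J)/2 \rfloor - 1$, which is exactly the bound claimed.

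There is essentially no obstacle: the observation reduces to the elementary identity $\lfloor m/2 \rfloor + \lfloor k/2 \rfloor = \lfloor (m + k)/2 \rfloor - 1$ whenever $m$ and $k$ are both odd. The hypothesis that neither component is a triangle is inherited from the contexts in which the observation will later be applied (where Observation~\ref{observation:subgraph} supplies the individual decompositions) and plays no role in the counting itself; likewise, the comparison with the bound from the conjecture is only an interpretive remark about the strength of the conclusion, not part of the argument.
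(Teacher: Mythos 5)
Your proof is correct and matches the paper's (implicit) reasoning exactly: the paper states this as an unproved observation, and the intended justification is precisely the arithmetic identity $\lfloor n_I/2\rfloor+\lfloor n_J/2\rfloor=(n_I+n_J)/2-1=\lfloor (n_I+n_J)/2\rfloor-1$ for odd $n_I,n_J$, applied to the disjoint union of the two decompositions. Your remarks that the non-triangle hypothesis and the comparison with the conjectured bound play no role in the counting are also accurate.
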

\begin{figure}[h]
\centering
\begin{subfigure}[b]{0.3\textwidth}

\resizebox{\linewidth}{!}{
\begin{tikzpicture} [node distance={15mm}, main/.style = {draw, circle}]
\node[main] (1) {$v$};
\node[main] (2) [ right of=1] {$x$};
\node[main] (3) [above right of=2] {$w$};
\node[main,shape = rectangle,minimum size=0.4 cm] (4) [below right of=2] {$z$};
\node[main,shape = rectangle,minimum size=0.4 cm] (5) [above right of=3] {$a$};
\node[main,shape = rectangle,minimum size=0.4 cm] (6) [ right of=3] {$b$};
\draw (1) -- (2);
\draw (2) -- (3);
\draw (2) -- (4);
\draw (3) -- (5);
\draw (6) -- (3);
\end{tikzpicture}
}
\caption{} \label{fig:pendant}
\end{subfigure}%
\begin{subfigure}[b]{0.5\textwidth}
\resizebox{\linewidth}{!}{
\begin{tikzpicture}[node distance={15mm}, main/.style = {draw, circle}]
\node[main] (1) {$v$};
\node[main] (2) [below right of=1] {$x$};
\node[main,shape = rectangle,minimum size=0.4 cm] (3) [below left of=1] {$y$};
\node[main,shape = rectangle,minimum size=0.4 cm] (4) [below of=1] {$z$};
\node[main,shape = rectangle,minimum size=0.4 cm] (5) [ right of=2] {$w$};
\node[main, draw=none] (6) [ right of=5] {};
\node[main,draw=none] (7) [ left of=3] {};
\draw (1) -- (2);
\draw (2) -- (5);
\draw (2) -- (4);
\draw (1) -- (3);
\end{tikzpicture}
}
\caption{} \label{fig:xcut}
\end{subfigure}
\caption { (a)  Claim~\ref{lemma:deg1} : Vertex $v$ cannot be a pendant vertex. (b)  Claim~\ref{lemma:deg3cut} : Vertex $x$ cannot be a cut vertex.  Any vertex which is drawn as a circle has all its edges depicted in the figure. Rectangular vertices may have edges not depicted in the figure. 
}
\end{figure}
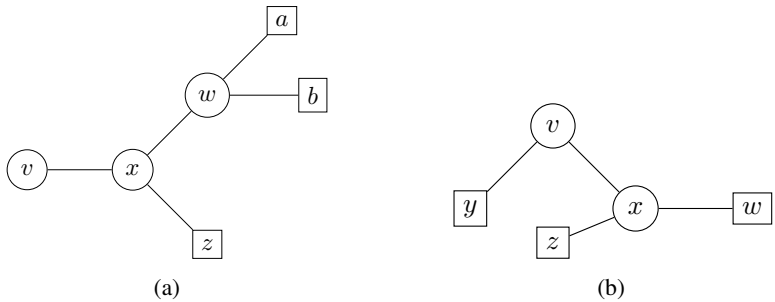
By Claim~\ref{lemma:deg12}, $G$ contains exactly one vertex of degree at most 2. Let this vertex be $v$. Let $N_G(v) = \{x\}$ if $d_G(v) = 1$ and $N_G(v) = \{x, y\}$ if $d_G(v) = 2$ . 
\\ \\ Now if we consider the 2-degenerate ordering of vertices of $G$ as given in Definition~\ref{definition:vertexorder}, the vertex $v$ will be the first vertex in the ordering. By claim~\ref{lemma:deg12}, all other vertices have degree at least $3$ in $G$. Note that $ G - v$ is again 2-degenerate and hence contains a vertex of degree at most $2$. We can infer that the vertex with degree at most $2$ in $G -v $ has to be either $x$ or $y$. Without loss of generality, let $x$ be that vertex. Thus $d_G(x) =3 $. Let $N_G(x) = \{v, w, z\}$.

\begin{claim_new}\label{lemma:deg1}
$G$ does not contain a pendant vertex.
\end{claim_new}
\begin{proof}
Suppose $d_G(v) = 1$. Since $G - \{v,x\}$ is also a 2-degenerate graph, it is easy to see that at least one of $w$ or $z$ in $G$ must have degree 3. Without loss of generality, let it be $w$ (See Fig~\ref{fig:pendant}).
\setcounter{case}{0}
\begin{case}
$x$ is not a cut vertex in $G - v$. 
\end{case}
Let $G' = G - \{vx, xw\}$. If removing edges $vx$ and $xw$ produced a triangle component, then $G$ contains at least two vertices with degree at most $2$ and violates Claim~\ref{lemma:deg12}. Since $x$ is not a cut vertex in $G - v$, there exists a path from $w$ to $z$, which does not go through any edges incident on $x$. Let $P'$ be a shortest path between $w$ and $z$ that does not contain $x$. Let $P = P' \cup xz$. Let $G'' = G' - E(P)$. Let $j \geq 0$ be the number of triangle components in $G''$. Let $H$ denote the graph obtained from $G''$, after the $j$ triangle components are removed. In $H$, $x$ and $v$ are isolated. By Observation~\ref{observation:subgraph}, the edges in $H$ can be decomposed into at most $\lfloor \frac{n - 2 - 3j}{2} \rfloor$ paths. The vertex $w$ has degree 1 in $H$. The path ending at $w$ in $H$ can be extended to $v$, by adding edges $xw$ and $vx$. By  Lemma~\ref{lemma:triangles},  the $j$ triangle components together with $P$ require $j+1$ paths. Then, the edges of $G$ can be decomposed into at most $\lfloor \frac{n - 2 - 3j}{2} \rfloor+ j+1 \leq \lfloor \frac{n}{2} \rfloor$ paths.
\begin{case}
$x$ is a cut vertex in $G- v$.
\end{case}
Since $x$ is a cut vertex in $G - v$, removal of $x$ will create two components, say $I$ and $J$. Let $J$ be the component containing vertex $w$ and $I$ be the component containing vertex $z$. Since the components $I$ and $J$ are 2-degenerate, they must have a vertex with degree at most $2$. By Claim~\ref{lemma:deg12}, we can see that such a vertex can not have degree $\leq$ 2 in $G$. Therefore, we can conclude that $d_G(z) = d_G(w) = 3$. \\ \\ 
Suppose both $J$ and $I$ have an odd number of vertices. By Observation~\ref{observation:floor}, the edges of components $J$ and $I$ together can be decomposed into at most $\lfloor \frac{n - 2}{2} \rfloor -1$ paths (note that $ G - \{v, x \}$ can not have any triangle components. Otherwise, $G$ contains at least two vertices with degree at most $2$ and violates Claim~\ref{lemma:deg12}). So we can use up to two paths to decompose the remaining edge set of $\{vx, xw, xz\}$, without going over the allowed limit of $\lfloor \frac{n }{2} \rfloor$ paths for $G$. Now if both $I$ and $J$ do not have an odd number of vertices, then at least one of the components has an even number of vertices. Since $d_G(z) = d_G(w) = 3$,  without loss of generality, let the component $J$ containing vertex $w$ have an even number of vertices. Here the proof splits into two cases again.
\begin{subcase}
$w$ is a cut vertex in $J$.
\end{subcase}
Since $w$ is a cut vertex, the removal of $w$ in $J$ creates two components $J_1$ and $J_2$. Since $J$ has an even number of vertices, one of the components in $J$ (after removal of $w$) has an odd number of vertices and the other component has an even number of vertices. Let $J_1$ be the odd component and $J_2$ be the even component. Let $a$ be the neighbour of $w$ in $J_1$, $b$ be its neighbour in $J_2$. The graph induced by $J_2$ and $w$ is an odd component. Therefore, by Observation~\ref{observation:floor}, the edges of component $J$ (excluding the edge $aw$) require ($\lfloor \frac{n_J }{2} \rfloor -1) $ paths,  where $n_J$ is the number of vertices in $J$.  Note that the edges inside the component $I$, regardless of whether it has odd or even number of vertices, can be decomposed into at most $\lfloor \frac{n_I }{2} \rfloor$ paths by Observation~\ref{observation:subgraph}, where $n_I$ is the number of vertices in $I$. We need one path for edges $aw$, $xw$, and $xz$. The edge $vx$ requires another path in the path decomposition of $G$. So, the edges of $G$ can be decomposed into at most $(\lfloor \frac{n_J }{2} \rfloor -1) + 1+ \lfloor \frac{n_I}{2} \rfloor +1 $ $\leq$ $\lfloor \frac{n }{2} \rfloor$ paths.
\begin{subcase}
$w$ is not a cut vertex in $J$.
\end{subcase}
Since the graph $J$ is also 2-degenerate, and because of Claim~\ref{lemma:deg12}, at least one of the vertices adjacent to $w$ must be a degree $3$ vertex. Let it be vertex $a$. Let $G'= G - \{vx,xw,wa\}$. Now we show that $G'$ has no triangle component. Let $Q$ denote the path $vx \cup xw \cup wa$. Note that $ G' = G - E(Q)$. Let us assume that a triangle  component $T$ is present in $G'$. Clearly, $T$ must intersect with $Q$ at more than two vertices. Otherwise, $T$ contains at least one vertex with degree $2$ in $G$, which violates Claim~\ref{lemma:deg12}. Therefore all the vertices of $T$ must intersect $Q$ and they must be non-adjacent in $Q$. But that is not possible for the path $Q$ containing only three edges. Therefore, no triangle component exists in $G'$.\\ \\
Since $w$ is not a cut vertex, there exists a path between its neighbours $a$ and $b$ that does not go through any of the edges incident on $w$. Let $P'$  be a shortest such path. Let $P = P' \cup bw$. Let $G'' = G' - E(P)$. Let $k \geq 0$ be the number of triangle components of $G''$. Let $H$ denote the graph obtained from $G''$, after the $k$ triangle components are removed. In $H$, $w$ and $v$ are isolated. By Observation~\ref{observation:subgraph}, edges in $H$ can be decomposed into at most $\lfloor \frac{n - 2 - 3k}{2} \rfloor$ paths. By Lemma~\ref{lemma:triangles}, the $k$ triangle components together with $P$ require $k+1$ paths. The vertex $a$ has degree $1$ in $H$. The path ending at $a$ can be extended till $v$ without needing any additional paths (note that extending the path to $v$ does not create a cycle because $v$ and $w$ are isolated vertices in $H$ and the vertex $x$ is a cut vertex by our assumption). This takes care of edges $aw$,$wx$,$xv$. Then, the edges of $G$ can be decomposed into at most $\lfloor \frac{n - 2 - 3k}{2} \rfloor+ k+1 \leq$ $ \lfloor \frac{n}{2} \rfloor$ paths.     
\end{proof}

From Claim~\ref{lemma:deg12} and Claim~\ref{lemma:deg1}, we can conclude that $v$ is the only vertex in $G$ with degree at most 2, and that $d_G(v) = 2$.
\\
\begin{claim_new}\label{lemma:deg2cut}
$G$ does not contain a degree 2 cut vertex.
\end{claim_new}

\begin{proof}
Suppose $v$ is a cut vertex in $G$, whose removal separates the graph into components $X$ and $Y$ with $n_X$ and $n_Y$ vertices respectively. Let $x$ and $y$ be the neighbours of $v$ in components $X$ and $Y$ respectively. Consider the graph $G - v$. Let component $X' = X + vx $, with $n_X+1$ vertices. If components $X'$ and $ Y$ are not triangles, then by Observation~\ref{observation:subgraph}, the edges of $X'$ and $ Y$ can be decomposed into $\lfloor \frac{n_X +1 }{2} \rfloor$ paths and $\lfloor \frac{n_Y}{2} \rfloor$ paths respectively. The path in $X'$ that ends at $v$ can be extended by adding back the edge $vy$. This does not create any new paths. Clearly, component $X'$ is not a triangle. By Claim~\ref{lemma:deg12}, component $Y$ cannot be a triangle either. Hence, the edges of $G$ can be decomposed into at most $\lfloor \frac{n_X +1 }{2} \rfloor + \lfloor \frac{n_Y}{2} \rfloor$ $\leq$ $\lfloor \frac{n }{2} \rfloor$ paths, which is a contradiction. This implies that $v$ cannot be a cut vertex in $G$.  
\end{proof}

\begin{claim_new}\label{lemma:deg3cut} The vertex $x$ cannot be a cut vertex in $G$.
\end{claim_new}
\begin{proof}
Suppose $x$ is a cut vertex in $G$. Since $v$ can not be  a cut vertex, there exists a path between its neighbours $x$ and $y$, that does not use any of the edges incident on $v$ (See Fig~\ref{fig:xcut}). Let $z$ be the neighbour of $x$ along this path (note that $z$ can be $y$ itself, but that will not affect our proof). Let $G' = G - \{xz, vx\}$. Then, $G'$ has two components by our assumption that $x$ is a cut vertex. Clearly, neither components are triangle component. Let $n_A$ and $n_B$ denote the number of vertices in the two components. Since $G$ is a minimum counterexample, by Observation~\ref{observation:subgraph}, we can decompose the edges of the components into at most $\lfloor \frac{n_A}{2} \rfloor$ and $\lfloor \frac{n_B}{2} \rfloor$ paths respectively. Note that vertices $v$, $y$ and $z$ are in the same component, while vertex $x$ is in a different component. Since vertex $v$ and vertex $x$ each have degree $1$ in their respective components, we can add the removed edges $vx$ and $xz$ without creating any new paths. This can be done by adding the edge $vx$ to the path ending at $v$, and the edge $xz$ to the path ending at $x$, in their respective components. So, the edges of $G$ can be decomposed into at most $\lfloor \frac{n_A}{2} \rfloor$ $+$ $\lfloor \frac{n_B}{2} \rfloor$ $\leq$ $\lfloor \frac{n }{2} \rfloor$ paths, which is a contradiction.
\end{proof}
 
By Claim~\ref{lemma:deg3cut}, $x$ is not a cut vertex. Recall that $N_G(x) = \{v,z,w\}$. Based on the degrees of neighbours of $x$ we have the following cases: 
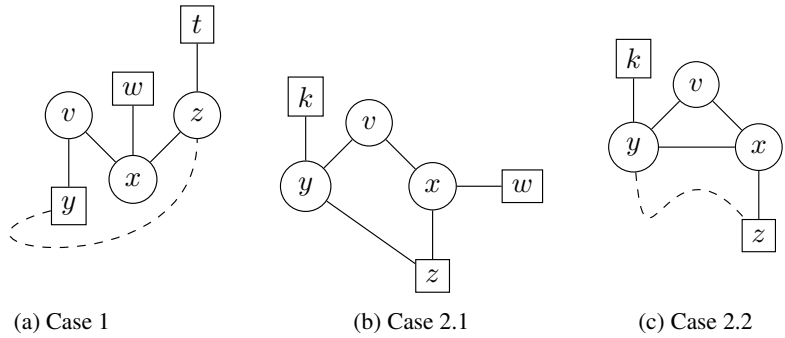
\begin{figure}[h!]
\centering
\begin{subfigure}[b]{0.3 \textwidth}
\resizebox{\linewidth}{!}{
\begin{tikzpicture} [node distance={11mm}, main/.style = {draw, circle}]
\node[main] (1) {$v$};
\node[main] (2) [below right of=1] {$x$};
\node[main,shape = rectangle,minimum size=0.4 cm] (3) [below of=1] {$y$};
\node[main,shape = rectangle,minimum size=0.4 cm] (4) [ above of=2] {$w$};
\node[main]  (5) [ above right of=2] {$z$};
\node[main,shape = rectangle,minimum size=0.4 cm] (6) [ above of=5] {$t$};
\draw (1) -- (2);
\draw (2) -- (5);
\draw (2) -- (4);
\draw (1) -- (3);
\draw (5) -- (6);
\draw [dashed] (3) to [ out=195,in=270,looseness=2.4] (5);
\end{tikzpicture}
}\caption{Case \ref{ncv1}  } \label{fig:casea} 
\end{subfigure} \hspace{0.5cm}\begin{subfigure}[b]{0.25 \textwidth}
\resizebox{\linewidth}{!}{
\begin{tikzpicture}[node distance={11mm}, main/.style = {draw, circle}]
\node[main] (1) {$v$};
\node[main] (2) [below right of=1] {$x$};
\node[main] (3) [below left of=1] {$y$};
\node[main,shape = rectangle,minimum size=0.4 cm] (6) [above of=3] {$ k$};
\node[main,shape = rectangle,minimum size=0.4 cm] (4) [ right of=2] {$w$};
\node[main,shape = rectangle,minimum size=0.4 cm](5) [ below of=2] {$z$};
%\node[main,shape = rectangle,minimum size=0.4 cm] (6) [ below of=5] {$t$};
%\draw [dashed] (4) to [ out=275,in=20,looseness=1.5] (5);
\draw (1) -- (2);
\draw (2) -- (5);
\draw (2) -- (4);
\draw (3) -- (5);
\draw (1) -- (3);
\draw (6) -- (3);

\end{tikzpicture}
}
\caption{ Case \ref{ncv2.1}}\label{fig:casec}
\end{subfigure} \hspace{0.5cm} \begin{subfigure}[b]{0.17\textwidth}
\resizebox{\linewidth}{!}{
\begin{tikzpicture}[node distance={11mm}, main/.style = {draw, circle}]
\node[main] (1) {$v$};
\node[main] (2) [below right of=1] {$x$};
\node[main](3) [below left of=1] {$y$};
\node[main,shape = rectangle,minimum size=0.4 cm] (4) [ above of=3] {$k$};
\node[main,shape = rectangle,minimum size=0.4 cm] (5) [ below of=2] {$z$};
\draw (4) -- (3);
\draw (1) -- (2);
\draw (2) -- (5);
\draw (3) -- (2);
\draw (1) -- (3);
\draw [dashed] (3) to [ out=275,in=130,looseness=2.6] (5);

\end{tikzpicture}
}
\caption{ Case \ref{ncv2.2}}\label{fig:caseb}
\end{subfigure}
\caption {Figure depicts scenarios if $x$ is not a cut vertex. Case \ref{ncv1}: $x$ has a degree $3$ neighbour $z$. Case \ref{ncv2.1}: $x$ has no degree $3$ neighbour and $d_G(y) = 3$. Case \ref{ncv2.2}: $x$ has no degree $3$ neighbour and $d_G(y) = 4$. Any vertex which is drawn as a circle has all its edges depicted in the figure. Rectangular vertices may have edges not depicted in the figure.} 
\end{figure}
\begin{case2}\label{ncv1} $x$ has a degree 3 neighbour.
 \end{case2}
Without loss of generality, let that neighbour be $z$ (See Fig~\ref{fig:casea}). Let $G'= G - \{xz\}$. Since $x$ is not a cut vertex in $G$, there exists a path from $z$ to $v$, that does not go through $x$. Let $P'$ be a shortest such path. Let $P = P' \cup zt \cup xv$, where $t$ is the other neighbour of $z$ that does not lie on $P'$.  Let $G'' = G' -E(P)$. Let $j \geq 0$ be the number of triangle components of $G''$. Note that removal of the path $P$ isolates vertices $z$ and $v$. Let $H$ denote the graph obtained from $G''$, after the $j$ triangle components are removed. The edges in $H$ can be decomposed into at most $\lfloor \frac{n - 2 - 3j}{2} \rfloor$ paths. The vertex $x$ has degree 1 in $H$. The edge $xz$ can be added back by extending the path in $H$ that ends at $x$. The $j$ triangle components together with $P$ require $j+1$ paths, by Lemma~\ref{lemma:triangles}. Then, the edges of $G$ can be decomposed into at most $\lfloor \frac{n - 2 - 3j}{2} \rfloor+ j+1$ $\leq$ $ \lfloor \frac{n}{2} \rfloor$ paths. Thus $d_G(z) \geq4 $ and $d_G(w) \geq 4$.  \\

\begin{case2} \label{ncv2} $x$ has no degree 3 neighbour.
 \end{case2} 
Based on the degree of $y$, we have following subcases:\\
\begin{subcase2} \label{ncv2.1} $d_G(y) = 3$.  \end{subcase2}

Note that the case of $y$ having a degree $3$ neighbour is similar to Case \ref{ncv1}, with $y$ playing the role of $x$. Vertices $v$, $x$ and $y$ are the first three vertices in the vertex removal order given by Definition~\ref{definition:vertexorder}. Let some vertex $a$ be the next vertex in the order after $v, x, y$. Vertex $v$ is the only vertex with degree $2$ in $G$. Since $G - \{v,x,y\}$ is also 2-degenerate, vertex $a$ has to be a neighbour of $x$ or $y$. But $x$ and $y$ have no neighbour with degree $3$. Therefore, $d_G(a) \geq 4$.  Since vertex $a$ has degree $2$ in $G - \{v,x,y\}$, $a$ must be a neighbour of both $x$ and $y$ and $d_G(a) = 4$. Without loss of generality, let $z$ be $a$.(See Fig~\ref{fig:casec}). \\ \\
Recall that $w$ is the other neighbour of $x$. Let $G' = G - \{vx, vy\}$.  Let $P =yz \cup zx \cup xw$. Let $G''=G' - E(P)$. Let $j \geq 0$ be the number of triangle components of $G''$. The vertices $x$ and $v$ are isolated now. Let $H$ denote the graph obtained from $G''$, after the $j$ triangle components are removed. The edges in $H$ can be decomposed into at most $\lfloor \frac{n - 2 - 3j}{2} \rfloor$ paths. $d_H(y) = 1$. So, the path ending at $y$ can be extended to include edges $vy$ and $vx$. The $j$ triangle components and $P$ require at most $j+1 $ paths. Then, the edges of $G$ can be decomposed into at most $\lfloor \frac{n - 2 - 3j}{2} \rfloor+ j+1$ $\leq$ $ \lfloor \frac{n}{2} \rfloor$ paths. Therefore, $d_G(y) \geq 4$.
\begin{subcase2} \label{ncv2.2} $d_G(y) \geq 4$.\end{subcase2}
Vertices $v$ and $x$ are the first two vertices in the vertex removal order given by Definition~\ref{definition:vertexorder}. Since $G - \{v,x\} $ is also 2-degenerate, the next vertex in the order must also be a neighbour of $x$ or $v$. Recall that $d_G(y) \geq 4$ and $x$ has no neighbour with degree $3$ or less. Since $G - \{v,x\}$ is 2-degenerate and must have a vertex with degree at most $2$, $d_G(y) = 4$ and $y$ has to be a neighbour of $x$ (See Fig~\ref{fig:caseb}).\\  \\
Let $G' = G - \{xy, xv\}$. If removing edges $xy$ and $xv$ created a triangle component in $G'$, then $G$ contains at least two vertices with degree at most $2$ and that violates Claim~\ref{lemma:deg12}. Since $x$ is not a cut vertex, there exists a path from $z$ to $v$ that does not go through any of the incident edges on $x$. Let $P'$ be a shortest such path. Let  $P = P' \cup zx$.  Let $G''= G' -E(P)$. Let  $j \geq 0$ be the number of triangle components of $G''$.  Let $H$ denote the graph obtained from $G''$, after the $j$ triangle components are removed. The vertices $x$ and $v$ are isolated in $H$. The vertex $y$ in $H$ is of degree $1$. So, the edges $xy$ and $xv$ can be added to the path ending at $y$ in $H$. Then, the edges of $G$ can be decomposed into at most $\lfloor \frac{n - 2 - 3j}{2} \rfloor+ j+1 \leq \lfloor \frac{n}{2} \rfloor$ paths. \\ \\
Therefore, we infer that $G$ is not a counterexample. This completes our proof for Theorem ~\ref{theorem:main}.\qed
\section{Conclusion}\label{sec:con}
We have proved Gallai's path decomposition conjecture for 2-degenerate graphs. ~\cite{botler2020gallai} proved the conjecture for graphs with treewidth at most 3 and ~\cite{botler2019gallai}  proved the conjecture for triangle-free planar graphs. Note that these graphs are proper subclasses of 3-degenerate graphs. Hence proving the conjecture for 3-degenerate graphs would generalize the results in~\cite{botler2020gallai} and~\cite{botler2019gallai}.

\acknowledgements
\label{sec:ack}
We would like to thank Ashwin Joisa, Dibyadarshan Hota and  Sai Charan for the initial discussions on the problem.

\nocite{*}
\bibliographystyle{abbrvnat}
\bibliography{references}
\label{sec:biblio}

\end{document}